\documentclass[runningheads]{CMSIM}

\usepackage{graphicx} 

\usepackage{amsmath,amssymb}
\newcommand{\Ii}{ \mathbf{1} }

\pagestyle{empty}
\renewcommand{\thefootnote}

\setlength{\headsep}{0pt}

\title*{Extremes in Random Graphs Models of Complex Networks}

\titlerunning{\it Extremes in Random Graphs}

\author{
Natalia Markovich\inst{1}
}

\authorrunning{\it Markovich}

\institute{
V.A.Trapeznikov Institute of Control Sciences,
                Russian Academy of Sciences, Profsoyuznaya 65, 117997 Moscow, Russia,\\
Moscow Institute of Physics and Technology State University,
Kerchenskaya 1À, 117303 Moscow, Russia,
 \\(E-mail: nat.markovich@gmail.com)
}
\setcounter{page}{1}
\begin{document}
\thispagestyle{empty}
\maketitle
\setlength{\leftskip}{0pt}
\setlength{\headsep}{16pt}
\begin{abstract}
Regarding the analysis of Web communication, social and complex networks the fast finding of most influential nodes in a network graph constitutes an important research problem.
We use two indices of the influence of those nodes, namely, PageRank and a Max-linear model. We consider the PageRank 
as
an autoregressive process with a random number of random coefficients that depend on ranks of incoming nodes and their out-degrees and assume that the coefficients are independent and distributed with regularly varying tail and with the same tail index. Then  it is proved that the tail index and the extremal index are the same for both PageRank and the Max-linear model and the values of these indices are found. The achievements are based on the study of random sequences of a random length and the comparison of the distribution of their maxima and linear combinations.
\keyword{Extremal Index, PageRank, Max-Linear Model, Branching Process, Autoregressive Process, Complex Networks}
\end{abstract}

\section{Introduction}
Regarding the analysis of Web communication, social and complex networks the fast finding of most influential nodes in a network graph constitutes an important research problem. PageRank remains the most popular characteristic of such influence. We aim to find an extremal index of  PageRank whose reciprocal value determines the first hitting time, i.e. a minimal time to reach the first influential node by means of a PageRank random walk. The extremal index $\theta\in[0,1]$ has many other interpretations and  plays a significant role in the theory of extreme values. Particularly, the limit distribution of maxima of stationary random variables (r.v.s) depends on $\theta$. For independent r.v.s $\theta=1$ holds.
\\
$\theta$ has a connection to the tail index that shows the heaviness of the tail of a stationary distribution of an underlying process.
\\
Google's PageRank defines the rank $R(X_i)$ of the Web page $X_i$  as
\begin{equation}\label{0}R(X_i)=c\sum_{X_j\in N(X_i)}\frac{R(X_j)}{D_j}+(1-c)q_i,\qquad i=1,...,n,\end{equation}
where $N(X_i)$ is the set of pages that link to $X_i$ (in-degree), $D_j$ is the number of outgoing links of page $X_j$ (out-degree), $c\in(0,1)$  is a damping factor, $q=(q_1,q_2,...,q_n)$ is a personalization probability vector or user preference such that $q_i\ge 0$ and $\sum_{i=1}^nq_i=1$, and  $n$ is the total number of pages, \cite{LangMey:06}. We omit in (\ref{0}) the term with dangling nodes for simplicity.
\\
 PageRank of a randomly selected page (a node in the graph) with random in- and out-degrees may be considered as
a branching process (Cf. \cite{ChLiOl:14}, \cite{Jel:10},  \cite{Vol:10})
 \begin{equation}\label{6}R_{i}=\sum_{j=1}^{N_i}A_{j}R_i^{(j)}+Q_i,~~ i=1,...,n,\end{equation}
denoting $R_i=R(X_i)$, $A_{j}=^dc/D_j$, $Q_i=(1-c)q_i$, \cite{Vol:10}. $R_i^{(j)}$ are ranks of descendants 
of node $i$, i.e. nodes with incoming links to node $i$.
The 
r.v. $N_i$ determines an in-degree, i.e. a number of directed edges to the $i$th node, and a number of nodes in the first generation of descendants belonging to the $i$th node as a parent,   $\{Q_i\}$  is a sequence of i.i.d. r.v.s.
\\
Starting from the initial page (node) $X_0$, a PageRank random walk determines a regenerative process or Harris recurrent process $\{X_t\}$, letting it visits pages-followers of the underlying node with probability $c$ and it restarts with probability $1-c$ by jumping to a random independent node.
\\
A Max-linear model can be considered as an alternative characteristic of the node influence. This model is obtained by a substitution of sums in Google's definition of PageRank by maxima, i.e.
\begin{equation}\label{7}R_{i}=\bigvee_{j=1}^{N_i}A_{j}R_i^{(j)}\vee Q_i,~~ i=1,...,n,\end{equation}
is proposed in \cite{GisKlu:15}.
\\
Formally, (\ref{6}) can be considered as an autoregressive process  with the random number $N_i$ of random coefficients and the independent random term $Q_i$. The extremal index of $AR(1)$ processes with regularly varying stationary distribution and its relation to the tail index were considered in
\cite{Haan:1989}. The extremal index of  $AR(q)$, $q\ge 1$ processes with $q$ random coefficients was obtained in \cite{KluPer:07} in a form which is not convenient for calculations. In \cite{Gold:13} the results by \cite{Haan:1989} were extended to multivariate regularly varying distributed random sequences and the extremal  and tail indices of sum and maxima of such sequences with $l\ge 1$ r.v.s were derived.
\\
Our achievements extend and adapt the results by \cite{Gold:13} to PageRank and  Max-linear processes. The problem concerns the finding of the extremal index of a  random graph that models a real network where incoming nodes of the root node  may be linked and, hence, be dependent. Such a random graph is called a Thorny Branching Tree (TBT) since any node may have outbound stubs (teleportations) to arbitrary nodes of the network, \cite{ChLiOl:14}.  In this respect, such a graph cannot be considered as a pure Galton-Watson branching process where descendants of any node are mutually independent and teleportations are impossible.
\\
The paper is organized as follows. In Section \ref{Sec1} we recall necessary 
results regarding the relation between the tail  and extremal indices obtained in \cite{Gold:13} for multivariate random sequences which are regularly varying distributed (Theorems \ref{T1} and \ref{T2}).
Linear combinations and maxima of the random sequences of a fixed length are considered and it is derived that they have the same tail and extremal indices.
In Section \ref{Sec2} we extend Theorem \ref{T2} to the case of unequal tail indices assuming r.v.s of a random sequence (Theorem \ref{T3}).
In Section \ref{Sec3} we consider sequences of random lengths and obtain the tail and extremal indices of their linear combinations and maxima (Theorem \ref{T4}). We further discuss how these results can be applied to  PageRank and the Max-linear processes
in Section \ref{Sec4}.
\section{Related Work}\label{Sec1}
Let $\{R_j\}$ be a stationary sequence with distribution function $F(x)$ and maxima $M_n=\max_{1\le j\le n}R_j$. We shall  interpret $\{R_j\}$ as PageRanks of Web pages.
\begin{definition}\label{Def1}A stationary sequence  $\{R_n\}_{n\ge 1}$ is said to have extremal index
$\theta\in[0,1]$ if
for each $0<\tau <\infty$ there is a sequence of real numbers $u_n=u_n(\tau)$ such that
\begin{equation}\label{1}\lim_{n\to\infty}n(1-F(u_n))=\tau \qquad\mbox{and}\end{equation}
\begin{equation}\label{2}\lim_{n\to\infty}P\{M_n\le u_n\}=e^{-\tau\theta}\end{equation}
hold (\cite{Lead:83}, p.53).
\end{definition}
In \cite{Gold:13} the following theorems are proved which we will use to find the extremal and tail indices of PageRank and a Max-linear model.
Let $Y_n^{(1)},Y_n^{(2)},...,Y_n^{(l)}$, $n\ge 1$, $l\ge 1$ be sequences of r.v.s having stationary distributions with tail indices $k_1,...,k_l$ and extremal indices $\theta_1,...,\theta_l$, respectively, i.e.
\begin{eqnarray*}
P\{Y_n^{(i)}>x\}&\sim &c^{(i)}x^{-k_i}\qquad \mbox{as}\qquad x\to\infty,\end{eqnarray*}
where $c^{(i)}$ are some real positive constants.
\\
Let us consider the weighted sum
\begin{eqnarray}\label{8}
Y_n(z)&=&z_1Y_n^{(1)}+z_2Y_n^{(2)}+...+z_lY_n^{(l)}, \qquad z_1,...,z_l>0
\end{eqnarray}
 and denote its tail index by $k(z)$ and extremal index by $\theta(z)$. Supposing that there is a minimal tail index among $k_1,...,k_l$, the following theorem states the corresponding $k(z)$ and  $\theta(z)$.
\begin{theorem}\label{T1}  (\cite{Gold:13}) Let $k_1<k_i$, $i=2,...,l$ hold. Then $Y_n(z)$ has the tail index $k(z)=k_1$ and the extremal index $\theta(z)=\theta_1$.
\end{theorem}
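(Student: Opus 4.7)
The strategy is to combine two standard heavy-tail principles: a single-big-jump argument pinning down the tail of the sum, and a reduction of the maximum of the sum to the maximum of its dominant coordinate $z_1 Y_n^{(1)}$.

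To establish $k(z)=k_1$, I would use that for any $\varepsilon\in(0,1)$,
$$P\{Y_n(z)>x\}\le P\{z_1 Y_n^{(1)}>(1-\varepsilon)x\}+\sum_{i=2}^{l}P\{z_i Y_n^{(i)}>\varepsilon x/(l-1)\},$$
and that each summand on the right is of order $x^{-k_i}$ with $k_i>k_1$, hence of lower order than $x^{-k_1}$. A matching lower bound comes from $P\{Y_n(z)>x\}\ge P\{z_1 Y_n^{(1)}>x,\ z_i Y_n^{(i)}\ge 0\ \forall i\ge 2\}$, using eventual positivity of each $Y_n^{(i)}$. Letting $\varepsilon\to 0$ gives $P\{Y_n(z)>x\}\sim c^{(1)} z_1^{k_1} x^{-k_1}$, so $k(z)=k_1$.

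For the extremal index, I would fix $\tau>0$ and choose $u_n$ satisfying $n(1-F_{Y(z)}(u_n))\to\tau$. By the tail asymptotics above, this is equivalent to $n(1-F_1(u_n/z_1))\to\tau$, so $u_n/z_1$ is a valid $\tau$-level normalizing sequence for $Y_n^{(1)}$, and Definition \ref{Def1} gives $P\{M_n^{(1)}\le u_n/z_1\}\to e^{-\tau\theta_1}$, where $M_n^{(i)}=\max_{j\le n}Y_j^{(i)}$. Since for $i\ge 2$,
$$P\{z_i M_n^{(i)}>\varepsilon u_n\}\le n\,P\{z_i Y_n^{(i)}>\varepsilon u_n\}\sim n\,c^{(i)}(z_i/\varepsilon)^{k_i} u_n^{-k_i}\to 0$$
(as $u_n\asymp n^{1/k_1}$ and $k_i>k_1$), a sandwich of $M_n(z)$ between $z_1 M_n^{(1)}$ and $\sum_i z_i M_n^{(i)}$ combined with Bonferroni lets me replace $\{M_n(z)\le u_n\}$ by $\{z_1 M_n^{(1)}\le u_n\}$ up to $o(1)$. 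Hence $P\{M_n(z)\le u_n\}\to e^{-\tau\theta_1}$, giving $\theta(z)=\theta_1$.

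The main obstacle will be justifying the final comparison of the two maxima events when the sequences $\{Y_n^{(i)}\}$ are not independent but only jointly regularly varying, as is envisioned in the paper. Union bounds control the marginal tails, but one must rule out that the negligible excursions of the subdominant coordinates occur systematically on the event $\{z_1 M_n^{(1)}\le u_n\}$; under multivariate regular variation the strict inequality $k_1<k_i$ forces the limit spectral measure to concentrate on the axis of $Y^{(1)}$, which is precisely what renders the reduction legitimate and where the framework imported from \cite{Gold:13} is expected to enter.
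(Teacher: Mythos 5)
Your argument is correct in outline, but note first that the paper does not actually prove Theorem~\ref{T1}: it is quoted from \cite{Gold:13}, and the closest proof in the text is that of the generalization, Theorem~\ref{T3}. Compared with that proof, your route is genuinely different in the extremal-index step. The paper works through the companion maximum $M_n^*(z)=\max_i z_iM_n^{(i)}$, factorizes $P\{M_n^*(z)\le u_n\}$ into a product (which requires mutual independence of the coordinate sequences), shows $z_iM_n^{(i)}n^{-1/k_1}\to^P0$ for the subdominant coordinates, and then transfers from $M_n^*(z)$ to $M_n(z)$ via Lemma~1 of \cite{Gold:13}, i.e.\ $P\{Y_n^*(z)\le u_n\mid Y_n(z)>u_n\}\to0$ together with $nP\{Y_n(z)>u_n\}\to c(z)x^{-k_1}$. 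You bypass $M_n^*(z)$ and the conditional lemma entirely and instead sandwich $z_1M_n^{(1)}\le M_n(z)\le\sum_i z_iM_n^{(i)}$, killing the subdominant maxima by the union bound $P\{z_iM_n^{(i)}>\varepsilon u_n\}\le nP\{z_iY_n^{(i)}>\varepsilon u_n\}\to0$. This is more elementary and buys you something real: every step uses only \emph{marginal} tails, so no independence across the $l$ coordinate sequences is needed --- consistent with the fact that Theorem~\ref{T1}, unlike Theorem~\ref{T2}, imposes no independence hypothesis. Two small points: (i) both the lower bound $P\{Y_n(z)>x\}\ge P\{z_1Y_n^{(1)}>x\}$ and the left half of your sandwich need $z_iY_n^{(i)}\ge0$ for $i\ge2$ (harmless for ranks, but it should be said, and the window term $P\{(1-\varepsilon)u_n<z_1M_n^{(1)}\le u_n\}\to e^{-\tau\theta_1}-e^{-\tau(1-\varepsilon)^{-k_1}\theta_1}$ still needs the $\varepsilon\to0$ limit spelled out); (ii) your closing worry about dependent coordinates is unfounded for precisely the reason above --- your bounds never use more than marginal regular variation, so no spectral-measure argument is required.
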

In the next theorem it is assumed that sequences $Y_n^{(1)},Y_n^{(2)},...,Y_n^{(l)}$ are mutually independent with equal tail indices $k_1=...=k_l=k$. We denote
\begin{eqnarray}\label{20}Y_n^*(z)&=&\max\left(z_1Y_n^{(1)},z_2Y_n^{(2)},...,z_lY_n^{(l)}\right).\end{eqnarray}
\begin{theorem} \label{T2} (\cite{Gold:13}) The sequences $Y_n^*(z)$ and $Y_n(z)$ have the same tail index $k$ and the same extremal index equal to
\begin{eqnarray*}\theta(z)&=&
\frac{c^{(1)}z_1^k}{{c^{(1)}z_1^k}+...+c^{(l)}z_l^k}\theta_1+...+\frac{c^{(l)}z_l^k}{{c^{(1)}z_1^k}+...+c^{(l)}z_l^k}\theta_l.
\end{eqnarray*}
\end{theorem}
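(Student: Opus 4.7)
The plan is to show that both sequences share the same tail (index \emph{and} constant), and then compute the two extremal indices in turn. By mutual independence,
$$P(Y_n^{*}(z)>x)=1-\prod_{i=1}^{l}\bigl(1-P(z_iY_n^{(i)}>x)\bigr)\sim \sum_{i=1}^{l}c^{(i)}z_i^{k}x^{-k}$$
as $x\to\infty$, and by the classical convolution-closure (``one-big-jump'') property for independent summands whose marginal tails are regularly varying with common index $k$,
$$P(Y_n(z)>x)=P\Bigl(\sum_{i=1}^{l}z_iY_n^{(i)}>x\Bigr)\sim \sum_{i=1}^{l}c^{(i)}z_i^{k}x^{-k}.$$
Writing $C:=\sum_{i}c^{(i)}z_i^{k}$, both sequences have tail index $k$, and for any fixed $\tau>0$ the common threshold $u_n\sim(nC/\tau)^{1/k}$ satisfies $n(1-F(u_n))\to\tau$ for both distributions, so the same $u_n$ may be used throughout.

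Next, I compute the extremal index of $Y_n^{*}(z)$ from componentwise independence. Setting $M_n^{(i)}:=\max_{j\le n}Y_j^{(i)}$, the identity $\max_{j\le n}Y_j^{*}(z)=\max_i z_iM_n^{(i)}$ gives
$$P\Bigl(\max_{j\le n}Y_j^{*}(z)\le u_n\Bigr)=\prod_{i=1}^{l}P(M_n^{(i)}\le u_n/z_i).$$
For each $i$ one has $nP(Y_n^{(i)}>u_n/z_i)\to \tau_i:=\tau c^{(i)}z_i^{k}/C$, and the extremal index $\theta_i$ of the $i$-th sequence (applied via Definition~\ref{Def1}) yields $P(M_n^{(i)}\le u_n/z_i)\to e^{-\tau_i\theta_i}$. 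Multiplying over $i$ produces
$$\lim_n P\Bigl(\max_{j\le n}Y_j^{*}(z)\le u_n\Bigr)=\exp\Bigl(-\sum_{i}\tau_i\theta_i\Bigr)=\exp(-\tau\theta(z)),$$
with $\theta(z)=\sum_i(c^{(i)}z_i^{k}/C)\theta_i$, exactly the claimed formula.

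For $Y_n(z)$ one direction is immediate (using non-negativity of the $Y_j^{(i)}$, which is natural in the PageRank setting): $Y_j(z)\ge Y_j^{*}(z)$, so $\max_j Y_j(z)\ge \max_j Y_j^{*}(z)$ and $\limsup_n P(\max_j Y_j(z)\le u_n)\le \exp(-\tau\theta(z))$. The main obstacle is the reverse inequality, since partial sums could in principle exceed $u_n$ through an accumulation of moderate summands without any single large term. I would rule this out with a ``no-double-big-jump'' decomposition: for $\delta\in(0,1)$,
$$\{Y_j(z)>u_n\}\subseteq \{Y_j^{*}(z)>(1-\delta)u_n\}\cup B_j,\quad B_j:=\{Y_j(z)>u_n,\ Y_j^{*}(z)\le(1-\delta)u_n\},$$
and on $B_j$ the largest and the second-largest of $z_1Y_j^{(1)},\ldots,z_lY_j^{(l)}$ must each exceed $\delta u_n/(l-1)$. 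By mutual independence and regular variation $P(B_j)=O(u_n^{-2k})$, so a union bound yields $P(\bigcup_{j\le n}B_j)=O(nu_n^{-2k})=O(u_n^{-k})=o(1)$. Applying the previous paragraph's calculation at the rescaled level $(1-\delta)u_n$ (which corresponds to parameter $\tau/(1-\delta)^{k}$) gives
$$\liminf_n P\Bigl(\max_{j\le n}Y_j(z)\le u_n\Bigr)\ge \exp\!\bigl(-\tau\theta(z)/(1-\delta)^{k}\bigr),$$
and letting $\delta\downarrow 0$ closes the sandwich. The delicate part is precisely this lower bound: it rests on the asymptotic independence of large values across the $l$ sequences, which is exactly what the mutual-independence hypothesis of the theorem supplies.
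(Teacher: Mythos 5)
The paper does not prove Theorem~\ref{T2} itself (it is quoted from \cite{Gold:13}); the closest in-paper analogue is the proof of Theorem~\ref{T3}, which runs the same machinery. Your proposal is correct and shares the overall skeleton with that argument: establish tail equivalence of $Y_n^*(z)$ and $Y_n(z)$ via inclusion--exclusion and the one-big-jump principle, factor $M_n^*(z)=\max_i z_iM_n^{(i)}$ over the independent components to get $\theta(z)=\sum_i (c^{(i)}z_i^k/C)\theta_i$, and then transfer the limit from the maxima of $Y_j^*(z)$ to the maxima of $Y_j(z)$. Where you genuinely diverge is in that last transfer step. The paper bounds $0\le P\{M_n^*(z)\le u_n\}-P\{M_n(z)\le u_n\}\le nP\{Y_n^*(z)\le u_n,\ Y_n(z)>u_n\}$ by stationarity and then invokes Lemma~1 of \cite{Gold:13} (that $P\{Y_n^*(z)\le u_n\mid Y_n(z)>u_n\}\to 0$) as a black box, which yields equality of the two limits at the \emph{same} level $u_n$ in one stroke. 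You instead prove the needed estimate from scratch: on $B_j=\{Y_j(z)>u_n,\ Y_j^*(z)\le(1-\delta)u_n\}$ the two largest summands must each exceed $\delta u_n/(l-1)$, whence $P(B_j)=O(u_n^{-2k})$ by independence and $nP(B_j)=o(1)$, followed by a $\delta$-rescaling sandwich and $\delta\downarrow 0$. Your version is self-contained (it supplies the proof of exactly the lemma the paper cites) at the cost of the extra $\delta$-bookkeeping; the paper's version is shorter but defers the one genuinely probabilistic step to the reference. Both arguments rely on the same implicit nonnegativity of the $Y_j^{(i)}$ to get $Y_j(z)\ge Y_j^*(z)$, and both implicitly use the standard fact that once the extremal index exists, $P\{M_n^{(i)}\le v_n\}\to e^{-\tau_i\theta_i}$ holds for \emph{any} normalizing sequence $v_n$ with $n(1-F_i(v_n))\to\tau_i$, not just the one whose existence Definition~\ref{Def1} asserts; you should flag that you are using this strengthening when you evaluate $P(M_n^{(i)}\le u_n/z_i)$.
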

 \section{Generalization of Theorem \ref{T2}}\label{Sec2}
 Theorem \ref{T3} is a generalization of Theorem \ref{T2} to the case of unequal tail indices.
\begin{theorem}\label{T3} Let $\{Y_n^{(j)}\}$, $n\ge 1$, $j=1,..., l$  be mutually independent regularly varying r.v.s 
with tail indices $k_1,...,k_{l}$, respectively.  Let $k_m<k_i$, $i=1,...,l$, $i\neq m$ hold. Then r.v.s $Y_n^*(z)$ and $Y_n(z)$ have the same tail index $k(z)=k_m$ and the same extremal index $\theta(z)=\theta_m$.
\end{theorem}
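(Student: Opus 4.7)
The plan is to reduce everything to the dominant component $Y_n^{(m)}$ by exploiting the fact that, under strict inequalities $k_m < k_i$, the tails $P\{Y_n^{(i)}>x\} \sim c^{(i)}x^{-k_i}$ for $i\neq m$ are $o\bigl(P\{Y_n^{(m)}>x\}\bigr)$. First I would determine the tail index. For the maximum, independence across sequences gives $P\{Y_n^*(z)>x\} = 1 - \prod_i\bigl(1-P\{z_iY_n^{(i)}>x\}\bigr) \sim \sum_i c^{(i)}z_i^{k_i}x^{-k_i} \sim c^{(m)}z_m^{k_m}x^{-k_m}$, because the term with the smallest exponent dominates. For the sum $Y_n(z)$, the same asymptotic holds by the standard convolution-tail fact that a sum of independent regularly varying summands with distinct indices inherits the heaviest tail. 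Thus $k(z)=k_m$ in both cases.

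Next I would fix the level sequence $u_n=u_n(\tau)$ by the normalization $n(1-F_{Y(z)}(u_n)) \to \tau$, which yields $u_n \sim (nc^{(m)}z_m^{k_m}/\tau)^{1/k_m}$. For the max case, independence of the sequences across $i$ gives the product formula
\begin{equation*}
P\Bigl\{\max_{1\le j\le n}Y_j^*(z)\le u_n\Bigr\}
=\prod_{i=1}^{l}P\Bigl\{\max_{1\le j\le n}Y_j^{(i)}\le u_n/z_i\Bigr\}.
\end{equation*}
For $i=m$, $n\,P\{Y_n^{(m)}>u_n/z_m\}\to\tau$ by the choice of $u_n$, so the $m$-th factor tends to $e^{-\tau\theta_m}$ via the extremal index of $\{Y_n^{(m)}\}$. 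For $i\neq m$, $n\,P\{Y_n^{(i)}>u_n/z_i\}\sim c^{(i)}z_i^{k_i}(c^{(m)}z_m^{k_m}/\tau)^{-k_i/k_m}\,n^{1-k_i/k_m}\to 0$ since $k_i/k_m>1$, and standard inequalities $P\{\max\le u_n\}\ge 1-nP\{Y_1>u_n\}$ force these factors to $1$. Hence $\theta(z)=\theta_m$ for $Y_n^*(z)$.

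For the sum $Y_n(z)$, I would use a sandwich. The lower bound $Y_n(z)\ge z_m Y_n^{(m)}$ immediately gives $P\{\max_j Y_j(z)\le u_n\}\le P\{z_m\max_j Y_j^{(m)}\le u_n\}\to e^{-\tau\theta_m}$. For the reverse inequality, pick any $\epsilon\in(0,1)$ and observe
\begin{equation*}
\bigl\{z_m\max_j Y_j^{(m)}\le (1-\epsilon)u_n\bigr\}\cap\bigcap_{i\neq m}\bigl\{z_i\max_j Y_j^{(i)}\le \tfrac{\epsilon}{l-1}u_n\bigr\}\subset\bigl\{\max_j Y_j(z)\le u_n\bigr\}.
\end{equation*}
The dominant event has probability tending to $\exp\bigl(-\tau\theta_m/(1-\epsilon)^{k_m}\bigr)$, and each auxiliary event has probability tending to $1$ by the same $n^{1-k_i/k_m}\to 0$ computation applied with the threshold $\epsilon u_n/(l-1)$. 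Letting $\epsilon\downarrow 0$ closes the sandwich, yielding $\theta(z)=\theta_m$ for $Y_n(z)$ as well.

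The main obstacle is the last step: the sum does not factor across the independent coordinates, so one must control it through the sandwich and verify that the light-tailed coordinates contribute $o(u_n)$ simultaneously over the maximum of $n$ values. This is exactly what the strict gap $k_i>k_m$ combined with the polynomial scale of $u_n$ delivers, but the argument must be stated in terms of the stationary marginal tails and the extremal-index relation (\ref{2}) rather than any pathwise identity, since the sequences $\{Y_n^{(j)}\}$ are only assumed to be stationary with extremal indices $\theta_j$.
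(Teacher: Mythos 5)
Your proposal is correct, and for the tail index and the maximum $Y_n^*(z)$ it follows essentially the same path as the paper: inclusion--exclusion (equivalently the product formula) shows the $m$-th component dominates the tail, and the normalized maxima $z_iM_n^{(i)}n^{-1/k_m}$ of the lighter-tailed components vanish, leaving $e^{-\tau\theta_m}$. Where you genuinely diverge is the treatment of the sum. The paper transfers the result from $M_n^*(z)$ to $M_n(z)$ by the coupling bound $0\le P\{M_n^*(z)\le u_n\}-P\{M_n(z)\le u_n\}\le nP\{Y_n^*(z)\le u_n,\,Y_n(z)>u_n\}$ and then invokes Lemma~1 of Goldaeva, which gives $P\{Y_n^*(z)\le u_n\mid Y_n(z)>u_n\}\to 0$ for \emph{equal} tail indices; the paper merely asserts that this lemma ``can be extended'' to unequal $k_1,\dots,k_l$ without proof. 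Your sandwich $\{z_mM_n^{(m)}\le(1-\epsilon)u_n\}\cap\bigcap_{i\ne m}\{z_iM_n^{(i)}\le\epsilon u_n/(l-1)\}\subset\{M_n(z)\le u_n\}\subset\{z_mM_n^{(m)}\le u_n\}$, followed by $\epsilon\downarrow 0$, bypasses that lemma entirely and is self-contained, at the modest cost of handling the sum separately rather than obtaining the stronger coupling statement that the two maxima processes are asymptotically indistinguishable at the level $u_n$. Two small points to make explicit: both your inclusions and the paper's require nonnegativity of the $Y_n^{(j)}$ (true in the PageRank setting and implicit throughout), and your use of $P\{z_mM_n^{(m)}\le(1-\epsilon)u_n\}\to\exp(-\tau\theta_m(1-\epsilon)^{-k_m})$ relies on the standard fact that the extremal-index limit holds along \emph{any} levels $v_n$ with $n(1-F(v_n))\to\tau'$, not only the particular sequence in Definition~\ref{Def1}; this is classical (Leadbetter et al.) but worth citing.
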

\begin{proof}
First we show that
\begin{eqnarray}\label{13}P\{Y_n^*(z)>x\}&\sim & c(z) x^{-k_m},\qquad x\to\infty,
\end{eqnarray}
where $c(z)=\sum_{i=1}^lc^{(i)}z_i^{k_i}\Ii\{k_i=k_m\}$. Similar to \cite{Gold:13} and as
\begin{eqnarray}\label{11}P\{z_iY_n^{(i)}>x\}&\sim & 
c^{(i)}z_i^{k_i} x^{-k_i}\end{eqnarray}
holds, we have
\begin{eqnarray}\label{4}&&P\{Y_n^*(z)>x\}= P\{\max(z_1Y_n^{(1)},..,z_lY_n^{(l)})>x\}\nonumber
\\
&=& 1-P\{\max(z_1Y_n^{(1)}\le x\}\cdot...\cdot P\{z_lY_n^{(l)})\le x\}\nonumber
\\
&=& \sum_{i=1}^lP\{z_iY_n^{(i)}>x\}\nonumber
\\
&+&\sum_{k=2}^l(-1)^{k-1}\sum_{i_1<i_2<...<i_k;i_1,i_2,...,i_k=1}^lP\{z_{i_1}Y_n^{(i_1)}>x\}\cdot...\cdot P\{z_{i_k}Y_n^{(i_k)}>x\}\nonumber
\\
&\sim & \sum_{i=1}^lc^{(i)}z_i^{k_i}x^{-k_i}\nonumber
\\
&+&\sum_{k=2}^l(-1)^{k-1}\sum_{i_1<i_2<...<i_k;i_1,i_2,...,i_k=1}^lc^{(i_1)}z_{i_1}^{k_{i_1}}x^{-k_{i_1}}\cdot...\cdot c^{(i_k)}z_{i_k}^{k_{i_k}}x^{-k_{i_k}}\nonumber
\\
&\sim &c(z)x^{-k_m}+o(x^{-k_m}),\qquad x\to\infty.
\end{eqnarray}
Thus, $P\{Y_n(z)>x\}\sim  c(z) x^{-k_m}$ follows from Theorem \ref{T1}.
\\
Now we show that $Y_n^*(z)$ and $Y_n(z)$ have the same extremal index $\theta(z)=\theta_m$. We use the same notations as in \cite{Gold:13}
\begin{eqnarray*}M_n^{(i)}&=& \max\{Y_1^{(i)}, Y_2^{(i)},...,Y_n^{(i)}\}, ~i=1,..,l;
\\
M_n(z)&=& \max\{Y_1(z), Y_2(z),...,Y_n(z)\},
\\
M_n^*(z)&=& \max\{Y_1^*(z), Y_2^*(z),...,Y_n^*(z)\}, ~n\ge 1.
\end{eqnarray*}
By (\ref{20}) it holds
\begin{eqnarray*}M_n^*(z)&=& \max\{z_1Y_1^{(1)},..., z_1Y_n^{(1)},..., z_lY_1^{(l)},..., z_lY_n^{(l)}\}
\\
&=& \max\{z_1M_n^{(1)},...,z_lM_n^{(l)}\}.
\end{eqnarray*}
Then we get
\begin{eqnarray}\label{12}
P\{M_n^*(z)n^{-1/k}\le x\}&=& P\{z_1M_n^{(1)}n^{-1/k}\le x,...,z_lM_n^{(l)}n^{-1/k}\le x\}
\end{eqnarray}
Since $k_m$ is the minimal tail index we have
\begin{eqnarray*}
P\{z_iM_n^{(i)}n^{-1/k_m}\le x\}&=& P\{z_iM_n^{(i)}n^{-1/k_i}\le xn^{1/k_m-1/k_i}\}.
\end{eqnarray*}
It implies
\begin{eqnarray}\label{22}z_iM_n^{(i)}n^{-1/k_m}\to^P 0, ~~i=1,...,l, ~~i\neq m ~~\mbox{as} ~~n\to\infty\end{eqnarray} since $\lim_{n\to\infty}P\{z_iM_n^{(i)}n^{-1/k_i}\le x\}= \exp(-c^{(i)}\theta_iz_i^{k_i}x^{-k_i})$. By (\ref{12}) it holds
\begin{eqnarray*}
P\{M_n^*(z)n^{-1/k_m}\le x\}&\to & \exp(-c^{(m)}z_m^{k_m}\theta_mx^{-k_m}),\qquad n\to\infty.
\end{eqnarray*}
Now we have to show that $P\{M_n^*(z)n^{-1/k_m}\le x\}\sim P\{M_n(z)n^{-1/k_m}\le x\}$. Let us denote $u_n=xn^{1/k_m}$. Note that the event $\{M_n^*(z)\le u_n\}$ follows from  $\{M_n(z)\le u_n\}$. Then, as in \cite{Gold:13}, we obtain
\begin{eqnarray}\label{26}&&0\le P\{M_n^*(z)\le u_n\}-P\{M_n(z)\le u_n\}
\\
&=&P\{M_n^*(z)\le u_n\}-P\{M_n^*(z)\le u_n, M_n(z)\le u_n\}\nonumber
\\
&=& P\{M_n^*(z)\le u_n, M_n(z)> u_n\}\le\sum_{k=1}^nP\{M_n^*(z)\le u_n, Y_k(z)> u_n\}\nonumber
\\
&\le &\sum_{k=1}^nP\{Y_k^*(z)\le u_n, Y_k(z)> u_n\}=nP\{Y_n^*(z)\le u_n, Y_n(z)> u_n\}\nonumber
\end{eqnarray}
due to the stationarity of the sequences $Y_n^*(z)$ and $Y_n(z)$.
Lemma 1 in \cite{Gold:13} states that
\begin{eqnarray}\label{27}
P\{Y_n^*(z)\le u_n|Y_n(z)> u_n\}\to 0, \qquad n\to\infty,
\end{eqnarray}
for i.i.d. regularly varying $\{Y_n^{(j)}\}$ with equal tail index. This can be extended to the case of unequal $k_1,...,k_l$.
Since
\begin{eqnarray}\label{29}
nP\{Y_n(z)> u_n\}&\to &c(z)x^{-k_m}, \qquad n\to\infty,
\end{eqnarray}
and (\ref{27}) hold,
it follows
\begin{eqnarray*}&&\lim_{n\to\infty}\left(P\{M_n^*(z)\le u_n\}-P\{M_n(z)\le u_n\}\right)=0.
\end{eqnarray*}
\end{proof}
\section{Extremal Index of PageRank and the Max-Linear Processes}\label{Sec3}
We denote in (\ref{6}) $R_i$ as $Y_i(z)$ and $A_jR_i^{(j)}=cR_i^{(j)}/D_j$,
$j=1,..., N_i$ as $z_jY_i^{(j)}$. 
Then we can represent (\ref{6}) in the form (\ref{8}) as
\begin{eqnarray}\label{9}Y_i(z)&=&\sum_{j=1}^{N_i}z_jY_i^{(j)}+Q_i,\qquad i=1,...,n,
\end{eqnarray}
where $N_i$ is a nonnegative integer-valued r.v.. 
In the context of PageRank $z_j=c$, $j=1,2,...,N_i$, $Q_i=z^*q_i$ with $z^*=1-c$ and $N_i$ represents the node in-degree. It is realistic to assume that $N_i$ is a power law distributed r.v. 
with parameter 
$\alpha>0$, i.e. \begin{eqnarray}\label{10}P\{N_i=\ell\}\sim \ell^{-\alpha}
\end{eqnarray}
and $N_i$ is bounded by a total number of nodes in the network. 
\\
The distribution of $N_i$ is in the domain of attraction of the Fr\'{e}chet distribution with shape parameter $\alpha>0$ and $P\{N_i>x\}=x^{-\alpha}\ell(x)$, $\forall x>0$, where $\ell(x)$ is a slowly varying function, since it satisfies a sufficient condition for this property, i.e. the von Mises type condition $\lim_{n\to\infty}nP\{N_i=n\}/P\{N_i>n\}=\alpha$, \cite{Ander80}.
\\
 Theorem \ref{T4} is an extension of Theorems \ref{T2} and \ref{T3} to maxima and sums of multivariate random sequences of random lengths, that can be applied to PageRank and the Max-linear processes.
Let us turn to (\ref{9}) and denote
\begin{eqnarray*}Y_{N_n}^*(z)&=&\max(z_1Y_n^{(1)},..,z_{N_n}Y_n^{(N_n)}, Q_n),
\\ Y_{N_n}(z)&=&z_1Y_n^{(1)}+..+z_{N_n}Y_n^{(N_n)}+Q_n.\end{eqnarray*}

\begin{theorem}\label{T4}Let $\{Y_n^{(j)}\}$, $n\ge 1$, $j=1,..., N_n$ and $q_n=Q_n/z^*$ be mutually independent regularly varying i.i.d. r.v.s 
with tail 
indices $k>0$ 
and $\beta>0$, 
respectively,
and 
$N_n$ be regularly varying r.v. with tail index $\alpha>0$. Let $Y_n^{(1)}$,.., $Y_n^{(N_n)}$ 
have extremal indices $\theta_1,...,\theta_{N_n}$, respectively. 
Then r.v.s $Y_{N_n}^*(z)$ and $Y_{N_n}(z)$ are regularly varying distributed with the same tail index $k(z)=\min(k,\alpha, \beta)$  
and  the same extremal index $\theta(z)$ such that
\begin{eqnarray}\label{33}
\theta(z) &=& (z^*)^{\beta}, 
~~ \mbox{if}~~k\ge\beta,\nonumber
\\
\theta(z)&=& \sum_{i=1}^{\infty}c^{(i)}\theta_i z_i^{k}/c(z), 
 ~~ \mbox{if}~~k< \beta,
\end{eqnarray}
where
$c(z)=\sum_{i=1}^{\infty}c^{(i)}z_i^{k}$ holds.
\end{theorem}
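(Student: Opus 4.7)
The plan is to reduce to the fixed-length setting of Theorems \ref{T2} and \ref{T3} by conditioning on the random length $N_n$, then to account separately for the contributions of $N_n$ (tail index $\alpha$) and of $Q_n$ (tail index $\beta$). First, to establish the tail asymptotics of the maximum, I would decompose
$$Y_{N_n}^*(z) = \max\Bigl( \max_{j \le N_n} z_j Y_n^{(j)},\; Q_n \Bigr),$$
apply identity (\ref{4}) from the proof of Theorem \ref{T3} conditionally on $N_n = \ell$ to obtain $P(\max_{j \le \ell} z_j Y_n^{(j)} > x) \sim (\sum_{j=1}^\ell c^{(j)} z_j^k) x^{-k}$, and then average over $N_n$. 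Standard estimates on random maxima of regularly varying sequences (Breiman-type identities when $k < \alpha$, and a direct exceedance estimate when $\alpha \le k$) produce a contribution of order $x^{-\min(k,\alpha)}$; combining with $P(Q_n > x) \sim c^{(q)} (z^*)^\beta x^{-\beta}$ yields the overall tail index $k(z) = \min(k, \alpha, \beta)$. For the sum $Y_{N_n}(z)$, Theorem \ref{T1} applied conditionally on $N_n$ shows the same regular variation and the same leading constant up to a factor depending only on the dominant-tail index.

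For the extremal index I would split into the two stated regimes. When $k \ge \beta$, the heaviest tail is carried by $\{Q_n\}$, which is i.i.d.\ and has extremal index $1$. Choosing $u_n$ so that $n P(q_n > u_n) \to \tau$, one obtains $n P(Y_{N_n}(z) > u_n) \to \tau (z^*)^\beta$, and the maxima inherit the cluster-free structure of $\{Q_n\}$: $P(M_n(z) \le u_n) \to \exp(-\tau (z^*)^\beta)$, giving $\theta(z) = (z^*)^\beta$. When $k < \beta$, the weighted max/sum of $\{z_j Y_n^{(j)}\}$ dominates, and applying Theorem \ref{T3} conditionally on each realization of $N_n$ and averaging yields the weighted-average formula $\theta(z) = \sum_{i \ge 1} c^{(i)} \theta_i z_i^k / c(z)$.

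The equivalence of extremal indices for $Y_{N_n}^*(z)$ and $Y_{N_n}(z)$ follows, as in Theorem \ref{T3}, by showing $P(Y_{N_n}^*(z) \le u_n \mid Y_{N_n}(z) > u_n) \to 0$, from which (\ref{26}) forces $P(M_n^*(z) \le u_n) - P(M_n(z) \le u_n) \to 0$. The main obstacle is precisely this last step in the random-length setting: Lemma 1 of \cite{Gold:13} was stated for fixed-length sequences of independent regularly varying variables, and its extension to random lengths demands uniform control in $N_n$. I would handle this by truncating at a slowly growing threshold $L_n$ with $P(N_n > L_n) = o(P(Y_{N_n}(z) > u_n))$, so that on $\{N_n > L_n\}$ the contribution to both sides of (\ref{26}) is negligible, while on $\{N_n \le L_n\}$ the fixed-length lemma applies uniformly. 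A similar truncation controls the step averaging the Theorem \ref{T3} formula over $N_n$ to produce the infinite-sum limit.
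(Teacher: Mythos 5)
Your proposal follows essentially the same route as the paper: truncate/condition on the random length $N_n$ so that the fixed-length results apply, treat $P\{N_n>x\}$ as a separate regularly varying contribution to obtain the tail index $\min(k,\alpha,\beta)$, identify which normalized maximum dominates in the two regimes $k\ge\beta$ and $k<\beta$, and extend Goldaeva's Lemma 1 argument to equate the extremal indices of the sum and the maximum. The only notable differences are cosmetic: the paper uses sandwich bounds with $N_n$ replaced by $\lfloor x\rfloor$ (resp.\ $\lceil u_n\rceil$) together with the subexponential sum--max equivalence for a growing deterministic number of terms, rather than conditioning on $N_n=\ell$ and averaging via Breiman-type estimates, and the weighted-average formula in the regime $k<\beta$ comes from the equal-tail-index argument of Theorem \ref{T2} (after showing $M_n^{(Q)}n^{-1/k}\to^P 0$) rather than from Theorem \ref{T3}, which would give a single $\theta_m$ instead of a weighted average.
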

\begin{proof} 
We shall show first that
\begin{eqnarray}\label{3}P\{Y_{N_n}^*(z)>x\}\sim P\{Y_{N_n}(z)>x\}\sim 
x^{-\min(k,\alpha,\beta)}.\end{eqnarray}
Since r.v.s $\{Y_n^{(j)}\}_{j\ge 1}$ are subexponential and i.i.d. it holds
\begin{eqnarray}\label{4a}P\{z_1Y_n^{(1)}+..+z_{\lfloor x\rfloor}Y_n^{(\lfloor x\rfloor)}>x\}&\sim & P\{\max(z_1Y_n^{(1)},..,z_{\lfloor x\rfloor}Y_n^{(\lfloor x\rfloor)})>x\}\nonumber
\\
&\sim &xP\{z_1Y_n^{(1)}>x\},\qquad x\to\infty,
\end{eqnarray}
\cite{GolKlu:98}.
Due to mutual independence of $Q_n$ and $\{Y_n^{(j)}\}$ and similar to (\ref{4})
we get
\begin{eqnarray}\label{16a}&&P\{Y_{N_n}^*(z)>x\}= P\{Y_{N_n}^*(z)>x, N_n\le x\}+P\{Y_{N_n}^*(z)>x, N_n>x\}\nonumber
\\
&\le & P\{Y_{\lfloor x\rfloor}^*(z)>x\}+P\{N_n>x\}\nonumber
\\
&=& 1-P\{\max(z_1Y_n^{(1)},..,z_{\lfloor x\rfloor}Y_n^{(\lfloor x\rfloor)})\le x\}P\{Q_n\le x\}+P\{N_n>x\}\nonumber
\\
&\sim & c_Nx^{-\alpha}+c_q(z^*)^{\beta}x^{-\beta}+c(z)x^{-k}\sim x^{-\min\{k,\alpha,\beta\}},
\end{eqnarray}
as $x\to\infty$, where $c_N, c_q>0$, $c(z)=\sum_{i=1}^{\infty}c^{(i)}z_i^{k}$.
On the other hand,
\begin{eqnarray}\label{15a}&&P\{Y_{N_n}^*(z)>x\}\ge
0+P\{Y_{N_n}^*(z)>x, N_n>x\}\nonumber
\\
&\ge & P\{Y_{\lceil x\rceil}^*(z)>x\}+P\{N_n>x\}+ P\{Y_{\lceil x\rceil}^*(z)\le x, N_n\le x\}-1\nonumber
\\
&\sim & x^{-\min\{k,\alpha,\beta\}}
\end{eqnarray}
holds, since $P\{Y_{\lceil x\rceil}^*(z)\le x, N_n\le x\}\to 1$ as $x\to\infty$. Due to (\ref{16a}) and (\ref{15a}) we obtain \begin{eqnarray*}P\{Y_{N_n}^*(z)>x\}&\sim &x^{-\min\{k,\alpha,\beta\}}.\end{eqnarray*}
The same is valid for $Y_{N_n}(z)$ by substitution of the maximum by the sum due to (\ref{4a}). 
Hence, (\ref{3}) follows.
\\
Let us prove that $Y_{N_n}^*(z)$ and $Y_{N_n}(z)$ have the same extremal index $\theta(z)$. 
Let us denote
\begin{eqnarray}\label{17a}
&&M_{N_n}^*(z)= \max\{Y_{N_1}^*(z), Y_{N_2}^*(z),...,Y_{N_n}^*(z)\}
\\
&=&\max\{z_1Y_1^{(1)},...,z_{N_1}Y_1^{(N_1)}, Q_1, ...,z_1Y_n^{(1)},...,z_{N_n}Y_n^{(N_n)}, Q_n\}\nonumber
\end{eqnarray}
and
\begin{eqnarray*}
M_{N_n}(z)&=& \max\{Y_{N_1}(z), Y_{N_2}(z),...,Y_{N_n}(z)\}
\\
&=&\max\{z_1Y_1^{(1)}+...+z_{N_1}Y_1^{(N_1)}+Q_1, ...,z_1Y_n^{(1)}+...+z_{N_n}Y_n^{(N_n)}+Q_n\}.
\end{eqnarray*}
Without loss of generality we may assume that $N_{n}=\max\{N_1,...,N_n\}$. Then we can complete vectors $(z_1Y_i^{(1)},...,z_{N_i}Y_i^{(N_i)})$, $i=1,2,...,n$ by zeros up to the dimension $N_n$ and separate the vector $(Q_1,...,Q_n)$. We rewrite (\ref{17a}) as
\begin{eqnarray*}
&&M_{N_n}^*(z)=\max\{z_1Y_1^{(1)},...,z_1Y_n^{(1)},..., z_{N_n}\cdot 0,...,z_{N_n}\cdot 0,..., z_{N_n}Y_n^{(N_n)},
\\
&&
Q_1,...,Q_n\}
\\
&=&\max(z_1M_n^{(1)}, z_2M_n^{(2)},..., z_{N_n}M_n^{(N_n)}, M_n^{(Q)}).
\end{eqnarray*}
Here, 
$M_n^{(Q)}=\max\{Q_1,...,Q_n\}$ relates to the second term in the rhs of (\ref{9}) corresponding to the user preference term $Q_i$ in (\ref{6}). 
Following the same arguments as after (\ref{12}) in Section \ref{Sec2} the statement follows. Really, denoting $k^*=\min\{k,\beta\}$ and $u_n=xn^{1/k^*}$, $x>0$, we get
\begin{eqnarray*}
&&P\{M_{N_n}^*(z)> u_n\}
\\
&=&P\{M_{N_n}^*(z)> u_n, N_n>u_n\}+P\{M_{N_n}^*(z)> u_n, N_n\le u_n\}
\\
&\le & P\{M_{\lceil u_n\rceil}^*(z)> u_n\}+ P\{N_n> u_n\}.
\end{eqnarray*}
On the  other hand, 
\begin{eqnarray*}
&&P\{M_{N_n}^*(z)> u_n\}\ge P\{M_{\lceil u_n\rceil}^*(z)>u_n, N_n>u_n\}
\\
&=& P\{N_n> u_n\}+P\{M_{\lceil u_n\rceil}^*(z)>u_n\}+P\{M_{\lceil u_n\rceil}^*(z)\le u_n, N_n\le u_n\}-1.\nonumber
\end{eqnarray*}
Note that $P\{M_{\lceil u_n\rceil}^*(z)\le u_n, N_n\le u_n\}-1$ tends to zero as $n\to\infty$. Hence, it holds
\begin{eqnarray}\label{24}
P\{M_{N_n}^*(z)> u_n\}&\sim &P\{M_{\lceil u_n\rceil}^*(z)>u_n\}+P\{N_n> u_n\},~~n\to\infty.
\end{eqnarray}
\\
If $k<\beta$ holds, then $M_n^{(Q)}\cdot n^{-1/k^*}\to^P 0$ as $n\to\infty$ since $P\{z_iM_n^{(i)}n^{-1/k}\le x\}\to \exp(-c^{(i)}\theta_iz_i^{k}x^{-k})$, $i=1,2,...$. 
Since $P\{N_n> u_n\}\sim u_n^{-\alpha}\to 0$ as $n\to\infty$ holds, then by 
(\ref{24}) it follows
\begin{eqnarray}\label{31}
\lim_{n\to\infty}P\{M_{N_n}^*(z)n^{-1/k^*}\le x\}&=&\exp\{-c(z)\theta^*(z) x^{-k}\},
\end{eqnarray}
where $\theta^*(z)=\sum_{i=1}^{\infty}c^{(i)}\theta_iz_i^{k}/c(z)$ and $c(z)=\sum_{i=1}^{\infty}c^{(i)}z_i^{k}$.
\\
If $k\ge\beta$ holds, then $z_iM_n^{(i)}\cdot n^{-1/k^*}\to^P 0$, $i=1,2,...$ 
follows since $P\{M_n^{(Q)}n^{-1/\beta}\le x\}\to \exp(-c_q(z^*)^{\beta}x^{-\beta})$ as $n\to\infty$. Thus, we obtain
\begin{eqnarray}\label{32}
\lim_{n\to\infty}P\{M_{N_n}^*(z)n^{-1/k^*}\le x\}&=& \exp(-c_q(z^*)^{\beta}x^{-\beta}).
\end{eqnarray}
Since $\{q_i\}$ are i.i.d., its extremal index is equal to one.
Then by (\ref{31}) and (\ref{32}) the extremal index of $Y_{N_n}^*(z)$ satisfies (\ref{33}) 
irrespectively of $\alpha$.
\\
It remains to show that $Y_{N_n}^*(z)$ and $Y_{N_n}(z)$ have the same extremal index. Similarly to \cite{Gold:13}, we have to derive that
\begin{eqnarray}\label{30}
\lim_{n\to\infty}P\{M_{N_n}(z)n^{-1/k^*}\le x\}&=&\lim_{n\to\infty}P\{M_{N_n}^*(z)n^{-1/k^*}\le x\}.
\end{eqnarray}
Since from the event $\{M_{N_n}(z)\le u_n\}$ it follows $\{M_{N_n}^*(z)\le u_n\}$, and $P\{M_{N_n}(z)\le u_n\}\le P\{M_{N_n}^*(z)\le u_n\}$ holds, we obtain 
similarly to (\ref{26})
\begin{eqnarray}\label{28}0&\le & P\{M_{N_n}^*(z)\le u_n\}-P\{M_{N_n}(z)\le u_n\}\nonumber
\\
&=&  P\{M_{N_n}^*(z)\le u_n\}-P\{M_{N_n}^*(z)\le u_n, M_{N_n}(z)\le u_n\}\nonumber
\\
&=& P\{M_{N_n}^*(z)\le u_n, M_{N_n}(z)> u_n\}\nonumber
\\
&=& P\{M_{N_n}^*(z)\le u_n, M_{N_n}(z)> u_n, N_n> u_n\}\nonumber
\\
&+& P\{M_{N_n}^*(z)\le u_n, M_{N_n}(z)> u_n, N_n\le u_n\}\nonumber
\\
&\le &  P\{N_n> u_n\}+ 
P\{M_{N_n}^*(z)\le u_n, M_{\lfloor u_n\rfloor}(z)> u_n, N_n\le u_n\}\nonumber
\\
&\le &  P\{N_n> u_n\}+\sum_{k=1}^{\lfloor u_n\rfloor}P\{Y_k^*(z)\le u_n, Y_k(z)> u_n\}\nonumber
\\
&=& P\{N_n> u_n\}+\lfloor u_n\rfloor P\{Y_k^*(z)\le u_n, Y_k(z)> u_n\}
\end{eqnarray}
due to the stationarity of $\{Y_k^*(z)\}$ and $\{Y_k(z)\}$.
\\
Completing vectors $(z_1Y_k^{(1)},...,z_{N_k}Y_k^{(N_k)})$ by zeroes up to the maximal dimension $\lfloor u_n\rfloor$, we get
\begin{eqnarray*}P\{Y_k^*(z)\le u_n, Y_k(z)> u_n\}&=& P\{\max(z_1Y_k^{(1)},...,z_{\lfloor u_n\rfloor}Y_k^{(\lfloor u_n\rfloor)}, Q_k)\le u_n,
\\
&& z_1Y_k^{(1)}+...+z_{\lfloor u_n\rfloor}Y_k^{(\lfloor u_n\rfloor)}+Q_k> u_n\}
\end{eqnarray*}
Then (\ref{30}) follows from (\ref{27}) and (\ref{29})
since in (\ref{28})
\begin{eqnarray*}&&
P\{Y_k^*(z)\le u_n, Y_k(z)> u_n\}
=
P\{Y_k(z)> u_n\} P\{Y_k^*(z)\le u_n|Y_k(z)> u_n\}
\end{eqnarray*}
holds.
\end{proof}
\section{Application to Indices of Complex Networks}\label{Sec4}
Theorem \ref{T4} can be applied to PageRank and the Max-linear processes. These processes then have the same tail index and the same extremal index. Theorem \ref{T4} is in the agreement with statements in \cite{Jel:10} and \cite{Vol:10}, namely, that the stationary distribution of PageRank $R=^d \sum_{j=1}^{N_i}A_{j}R_i^{(j)}+Q_i$ is regularly varying and its tail index is determined by a most heavy-tailed distributed term in the triple $(N_i, Q_i, A_iR_i^{(j)})$. This is derived if all terms in the triple are mutually independent. In contrast, Theorem \ref{T4} is valid for an arbitrary dependence structure between $N_n$ and $\{Y_n^{(j)}\}$ as well as $N_n$ and $Q_n$, and $\{N_i\}$ are not necessarily independent.
The novelty of Theorem \ref{T4}  is that the extremal index of both PageRank and the Max-linear processes is the same and it depends on the tail indices in the couple $(Q_i, A_iR_i^{(j)})$, irrespective of the tail index of $N_i$.
\\
The assumptions of both Theorem \ref{T4} and the statements in \cite{Jel:10} and \cite{Vol:10} do not reflect properly the complicated dependence between node ranks due to the entanglement of links  in a real network. For better understanding let us consider 
the matrix
\[\left(
    \begin{array}{cccccc}
      z_1Y_1^{(1)} & z_2Y_1^{(2)} & ...z_{N_1}Y_1^{(N_1)} & 0 & 0 & Q_1 \\
      z_1Y_2^{(1)} & z_2Y_2^{(2)} & ...z_{N_1}Y_2^{(N_1)} & ... z_{N_2}Y_2^{(N_2)} & 0 & Q_2 \\
      ...& ... & ... & ... & ... & ... \\
      z_1Y_n^{(1)} & z_2Y_n^{(2)} & ...z_{N_1}Y_n^{(N_1)} & ... z_{N_2}Y_n^{(N_2)} & ... z_{N_n}Y_n^{(N_n)} & Q_n \\
    \end{array}
  \right)
\]
\[
\left(
    \begin{array}{cccccc}
      (k, \theta_1) & (k, \theta_2) & ...(k, \theta_{N_1}) & ...(k, \theta_{N_2}) & ... ...(k, \theta_{N_n}) & ...(\beta, (z^*)^{\beta}) \\
    \end{array}
  \right)
\]
corresponding to (\ref{9}) and completed by zeros up to the maximal dimension, let's say $N_n$. Strings of the matrix correspond to  generations of descendants of nodes with numbers $1,2,...,n$. Each column may contain descendants of different nodes having the same extremal index $\theta_i$, $i=1,2,...,N_n$. All columns apart of the last one are identically regularly varying distributed with the same tail index $k$. The columns are mutually independent.
\\
In terms of some network, the conditions of Theorem \ref{T4} imply that ranks of all nodes with incoming links to a root node (i.e. its followers) are mutually independent, but
followers of different nodes may be dependent and, thus,  they are combined into clusters.
The reciprocal of the extremal index approximates the mean cluster size, \cite{Lead:83}.
\\
The statement (\ref{33}) implies that the extremal index of PageRank is equal to $\theta(z)=(1-c)^{\beta}$ if the user preference dominates (i.e. its distribution tail is heavier than the tail of ranks of followers). If the damping factor $c$ is close to one, then $\theta(z)$ is close to zero. The latter means  the huge-sized cluster of nodes around a root-node in the presence of rare teleportations. If $c$ is close to zero, then $\theta(z)$ is close to one due to the independence of frequent teleportations. If $k<\beta$ holds, then 
roughly, the mean size of the cluster is determined by the consolidation of all clusters related to the followers of the underlying root.
\\
In practice, the followers of a node may be linked and their ranks can therefore be dependent. The future work will focus on the extremal index of PageRank process when the terms $\{Y_i^{(j)}\}$ in (\ref{9}) are mutually dependent.

\end{document}